\theoremstyle{theorem}
\newtheorem{theorem}{Theorem}
\newtheorem{proposition}{Proposition}
\newtheorem{lemma}{Lemma}
\theoremstyle{definition}
\definecolor{Gray}{gray}{0.85}
\newcommand{\half}{\textstyle{\frac{1}{2}}}
\newcommand{\ord}{\mathop\mathrm{ord}}
\newcommand{\subgrp}{\text{\large$\trianglelefteq$}}
\newcommand{\NN}{\mathbb{N}}
\newcommand{\ZZ}{\mathbb{Z}}
\newcommand{\QQ}{\mathbb{Q}}
\newcommand{\DD}{\mathbb{D}}
\newcommand{\DK}{\mathbb{D}_K}
\newcommand{\Dbf}{\mathit{D}}
\newcommand{\Qbf}{\mathit{Q}}
\newcommand{\NE}{\mathbb{N}_\mathrm{E}}
\newcommand{\NO}{\mathbb{N}_\mathrm{O}}
\newcommand{\ZE}{\mathbb{Z}_\mathrm{E}}
\newcommand{\ZO}{\mathbb{Z}_\mathrm{O}}
\newcommand{\QE}{\mathbb{Q}_\mathrm{E}}
\newcommand{\QK}{\mathbb{Q}_\mathrm{K}}
\newcommand{\QO}{\mathbb{Q}_\mathrm{O}}
\newcommand{\QN}{\mathbb{Q}_\mathrm{N}}
\newcommand{\QP}{\mathbb{Q}_\mathrm{P}}
\begin{document}

% \title{CMJ Article Title}
\title{Parity and Partition of  the Rational Numbers} 

\author{Peter Lynch and Michael Mackey \\
            School of Mathematics \&\ Statistics, University College Dublin%
           \footnote{Peter.Lynch@ucd.ie; mackey@maths.ucd.ie.} }

\maketitle

%%  %Notes authors should leave bios out altogether.
%%  
%%  \begin{biog} %comment out in initial submission to allow for double blind reviewing
%%  \item[\biogpic{\includegraphics[width=84pt]{Woodrow_Wilson.pdf}}Woodrow Wilson] (twwilson@princeton.edu) received his PhD in history and political science from Johns Hopkins University. He held visiting positions at Cornell and Wesleyan before joining the faculty at Princeton, where he was eventually appointed president of the university.  Among his proudest accomplishments was the abolition of eating clubs at Princeton on the grounds that they were elitist.
%%  
%%  \item[\biogpic{\includegraphics[width=84pt]{Herbert_Hoover.pdf}}Herbert Hoover] (hchoover@stanford.edu) entered Stanford University in 1891, after failing all of the entrance exams except mathematics.  He received his BS degree in geology in 1895, spent time as a mining engineer, then was appointed by his co-author to the U.S. Food Administration and the Supreme Economic Council, where he orchestrated the greatest famine relief efforts of all time.
%%  \end{biog}
%%  

%%%%%%%%%%%%%%%%%%%%%%%%%%%%%%%%%%%%%%%%%%%%%%%%%%%%%%%%
%%%           BEGINNNG OF BODY OF DOCUMENT           %%%
%%%%%%%%%%%%%%%%%%%%%%%%%%%%%%%%%%%%%%%%%%%%%%%%%%%%%%%%

%%%%%%%%%%%%%%%%%%%%%%%%%%%%%%%%%%%
%%%    Abstract moved from end of paper.
%%%%%%%%%%%%%%%%%   SHORT ABSTRACT   %%%%%%%%%%%%%%%%%%%
\begin{abstract}
We define an extension of parity from the integers to the rational
numbers. Three parity classes are found --- even, odd and `none'.
Using the 2-adic valuation, we partition the rationals into subgroups
with a rich algebraic structure.

The natural density provides a means of distinguishing the sizes
of countably infinite sets.  The Calkin-Wilf tree has a remarkably
simple parity pattern, with the sequence `odd/none/even' repeating
indefinitely. This pattern means that the three parity classes have
equal natural density in the rationals.
A similar result holds for the Stern-Brocot tree.
\end{abstract}
%%%%%%%%%%%%%%%%%   END ABSTRACT   %%%%%%%%%%%%%%%%%%%%%%%%%%%%%%%%%

%%%%%%%%%%%%%%%%%%%%%%%%%%%%%%%%%%%

\noindent
The natural numbers $\NN$ split nicely into two subsets, the odd and even numbers
%  \begin{eqnarray*}
%  \NO &=& \{ 1, 3, 5, 7, \dots \} \\
%  \NE &=& \{ 2, 4, 6, 8, \dots \} \,.
%  \end{eqnarray*}
\begin{equation}
\NO = \{ 1, 3, 5, 7, \dots \} \,, \qquad \NE = \{ 2, 4, 6, 8, \dots \} \,.
\nonumber
\end{equation}
Stopping at some number $2N$, the odd and even numbers are equinumerous.
Stopping at $2N+1$, the odds are slightly ahead,
but as $N$ gets larger, the ratio of odd to even numbers tends to $1$.
So, we can say informally that there are the same number of odd and even integers.
This will be made precise below by defining densities for the sets
$\NO$ and $\NE$.
Similar arguments apply to the integers $\ZZ$, which split into two subsets
\begin{eqnarray*}
\ZO &=& \{\dots\  -3, -1, +1, +3, +5, \dots \} \\
\ZE &=& \{\dots\  -4, -2,\ \ 0, +2, +4,  \dots \} \,.
\end{eqnarray*}
The integers form an abelian group $(\ZZ,+)$ under addition. %
%%  \footnote{$(\ZZ,+,\times)$ is a commutative ring and indeed
%%  an integral domain (with no divisors of zero),
%%  of which $(\ZE,+,\times)$ is an ideal.}
The even numbers form an additive subgroup of $(\ZZ,+)$,
with index $[\ZZ : \ZE] = 2$ and two cosets $\ZE$ and $\ZE+1 = \ZO$.
This definition provides a bijection between the two cosets, which have the same cardinality.%
%  \footnote{Indeed, $\ZE$ is an ideal of the ring $\ZZ$.}

\section{Parity.}
\label{sec:parity}

The distinction between odd and even numbers is called \emph{parity}.
The even/odd concept is defined only for the integers. The distinction 
does not apply to fractions or irrational
numbers, but one may wonder if there is a natural way to extend the
concept of  parity to larger sets of numbers.

What characteristics might one require of such an extension?  The 
definition would have to agree with the traditional definition for the
integers, so $5$ would continue to be odd and $10$ even. In addition,
the usual `rules of parity' might be required:
\begin{enumerate}
\item The sum of two even numbers is even; the product is even.
\item The sum of two odd numbers is even; the product is odd.
\item The sum of an even and an odd number is odd; the product is even.
\item An odd number plus $1$ is even; an even number plus $1$ is odd.
\end{enumerate}
Table \ref{tab:AddMultZ} shows the effects of addition and multiplication
on the ring of integers.
%
%%%%%%%%%%%%%%%%%%%%%%%%%%%%%%

%     EVENS AND ODDS

%%%%%%%%%%%%%%%%%%%%%%%%%%%%%%
\begin{table}
\begin{center}
\caption{Addition table (left) and multiplication table (right) for $\ZZ$.}
\label{tab:AddMultZ}

\bigskip
\begin{minipage}[c]{0.45\textwidth}
\begin{center}
\begin{tabular}{||c||c|c||}   
   
\hline \hline 
        $\boldsymbol{+}$    &   $\mathbf{even}$  &  $\mathbf{odd}$  \\
\hline \hline
   $\mathbf{even}$   &   $even$  &  $odd$  \\ 
   $\mathbf{odd}$     &   $odd$   &  $even$  \\ 
\hline  \hline   

\end{tabular}   
\end{center}
\end{minipage}
\begin{minipage}[c]{0.45\textwidth}
\begin{center}
\begin{tabular}{||c||c|c||}   
 
\hline \hline 
   $\boldsymbol{\times}$       &   $\mathbf{even}$  &    $\mathbf{odd}$  \\
\hline \hline
       $\mathbf{even}$     &   $even$  &  $even$  \\ 
       $\mathbf{odd}$       &  $even$    &  $odd$  \\ 
  \hline  \hline   
     
\end{tabular}   
\end{center}
\end{minipage}
\end{center}
\end{table}   
%%%%%%%%%%%%%%%%%%%%%%%%%%%%%%

If the concept of parity is extended to larger sets of numbers,
some of the properties indicated above may have to be sacrificed.
For rational numbers, we might define a number $q=m/n$ to be even
if the numerator $m$ is even and odd if $m$ is odd. But 
then $\frac{1}{4}+\frac{1}{4} = \frac{1}{2}$,
meaning that two odd rationals would add to yield another odd one.

We will distinguish between `odd' and `uneven' rationals:
assuming $m$ and $n$ to be relatively prime integers, $(m,n)=1$,
we will adopt the following definition: 
\begin{equation}
\mbox{For any rational number\ } q=\frac{m}{n} \,,
\quad
\begin{cases}
q \ \mbox{is \emph{even} if $m$ is even}, \\
q \ \mbox{is \emph{uneven} if $m$ is odd} \,.
\end{cases}
\label{eq:def1}
\end{equation}

\section*{A three-way split.}
\label{sec:3way}

%%%   Since the numerical investigation indicates that --- with the chosen ordering ---
%%%   there are twice as many uneven as even rational numbers, we consider whether
There is a simple way of separating the rational numbers into three subsets:
\begin{equation}
\mbox{For any rational\ } q=\frac{m}{n} \,,
\quad
\begin{cases}
q \ \mbox{has parity \emph{even} if $m$ is even and $n$ is odd,} \\
q \ \mbox{has parity \emph{odd} if $m$ is odd and $n$ is odd,} \\
q \ \mbox{has parity \emph{none} if $m$ is odd and $n$ is even.}
\end{cases}
\label{eq:def2}
\end{equation}
The term \emph{none} is an initialism for `neither odd nor even'.
%%  Letting $e$ and $o$ denote arbitrary even and odd integers, 
%%  we represent the three parity classes symbolically:
%%  $$
%%  \mbox{Even:\ \ } \displaystyle{\frac{e}{o}} \qquad\qquad
%%  \mbox{Odd:\ \ }  \displaystyle{\frac{o}{o}} \qquad\qquad
%%  \mbox{None:\ \ } \displaystyle{\frac{o}{e}} \,.
%%  $$
Corresponding to this three-way partition, we define three subsets of the rationals:
\begin{eqnarray*}
\mbox{Even:\ \ }
\QE &=& \{ q\in\QQ :
q = \textstyle{\frac{2m}{2n+1}\ \mbox{for some}\ m,n\in\ZZ} \} \\
\mbox{Odd:\ \ }
\QO &=& \{ q\in\QQ :
q = \textstyle{\frac{2m+1}{2n+1}\ \mbox{for some}\ m,n\in\ZZ} \} \\
\mbox{None:\ \ }
\QN &=& \{ q\in\QQ :
q = \textstyle{\frac{2m+1}{2n}\ \mbox{for some}\ m,n\in\ZZ} \} \,.
\end{eqnarray*}
These three sets are mutually disjoint and
$\QQ = \QE \uplus \QO \uplus \QN$.
It is immediately obvious that $\ZE \subset \QE$ and
$\ZO \subset \QO$, confirming that the definition of parity
for the rationals is an extension of the usual meaning for the integers.
We see that the even and odd rationals
respect the four `rules of parity' listed above.

\subsection{``Twice as many uneven as even fractions''.}
\label{sec:twiceasmany}

The rational numbers are countable: they can be put into one-to-one
correspondence with the natural numbers.  We can list all rationals
in $(0,1)$ in a sequence where, for each $n$ in turn, all (new) numbers
$m/n$ with $m < n$ are listed in order. For $n_\mathrm{max}=8$ we have
\begin{eqnarray}
\bigl\{
\textstyle{
\frac{1}{2},\frac{1}{3},\frac{2}{3},\frac{1}{4},\frac{3}{4},
\frac{1}{5},\frac{2}{5},\frac{3}{5},\frac{4}{5},\frac{1}{6},\frac{5}{6},
\frac{1}{7},\frac{2}{7},\frac{3}{7},\frac{4}{7},\frac{5}{7},\frac{6}{7},
\frac{1}{8},\frac{3}{8},\frac{5}{8},\frac{7}{8}}
\bigr\} \,.
\label{eq:farish}
\end{eqnarray}
Rearrangement in increasing order of magnitude gives the Farey sequence $F_8$.
%!TEX encoding = UTF-8 Unicode
% Rearrangements of these in increasing order of magnitude give the Farey sequence.
%
%  \frac{1}{9},\frac{2}{9}, \frac{4}{9},\frac{5}{9},\frac{7}{9},\frac{8}{9},
%  \frac{1}{10},\frac{3}{10}, \frac{7}{10},\frac{9}{10}  \dots }

A \textsc{Mathematica} program was written to count the proportion
of rationals in each parity class in the interval $(0,1)$,
with denominators less than or equal to $n$, for a range of cut-off values
$n \le n_\mathrm{max}$.  The ratios are plotted in Fig.~\ref{fig:pratio20}.
As $n$ increases, the ratios of numbers with parity even, odd and none all
tend to the limit $\frac{1}{3}$. 
%%  As the cut-off $n$ increases, the ratio approximates $1/3$ for all three classes.
Colloquially, there are an equal number of rationals with parity even, odd and none,
and ``twice as many uneven as even rationals''.
%
%  The oscillations of the parity ratio are interesting. In general, $r$ grows when
%  the denominator is odd and decreases when it is even. But there are
%  also finer details that would merit further study.
%
%%%%%%%%%%%%%%%%%%%%%%%%%%%%%%
\begin{figure}[h]
\begin{center}
\includegraphics[width=0.95\textwidth]{./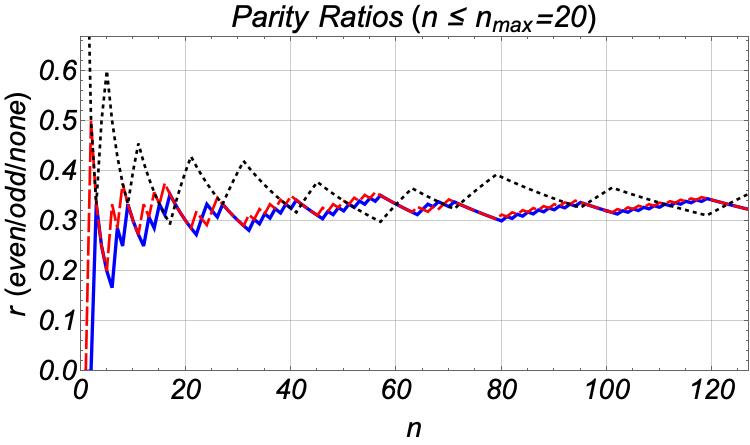} %  Was width=0.80
\caption{Parity ratio $r$ for rationals $m/n$ of parity even (solid line, blue online),
odd (dashed line, red online) and none (dotted line, black online) for $n\le n_{\mathrm{max}}=20$.}
\label{fig:pratio20}
\end{center}
\end{figure}
%%%%%%%%%%%%%%%%%%%%%%%%%%%%%

%%%%%%%%%%%   SET DENSITY
%%  \input QP-Set-Density.tex
\section{The Density of subsets of $\mathbb{N}$.}
\label{sec:density}

In pure set-theoretic terms, the set of even positive numbers is ``the same size''
as the set of all natural numbers; both are infinite countable sets.  However,
cardinality is a blunt instrument: with the usual ordering, every second natural number
is even and, intuitively, we feel that there are half as many even numbers as natural numbers.  
%%%%%  In particular, our intuition tells us that 
%%%%%  if $B$ is a proper subset of $A$, it must be smaller than $A$.
The concept of \emph{density} provides a means of expressing the relative sizes
of sets that is more discriminating than cardinality.

Density --- also called natural or asymptotic density --- is defined for many
interesting subsets of $\mathbb{N}$, although not for all subsets. Assume a 
subset $A$ of $\mathbb{N}$ is enumerated as $\{a_1, a_2, \dots \}$.
We define the density of $A$ in $\mathbb{N}$ as the limit, if it exists,
\begin{equation}
\rho_\mathbb{N}(A) = \lim_{n\to\infty}\ \frac{|\{a_k : a_k\le n\}|} {n} \,.
\label{eq:defdenN}
\end{equation}
Thus, if the fraction of elements of $A$ among the first $n$ natural numbers
converges to a limit $\rho_\mathbb{N}(A)$ as $n$ tends to infinity, then $A$
has density $\rho_\mathbb{N}(A)$ \cite{Te}.
More generally, if $A = \{a_1, a_2, a_3, \dots \}$
is a subset of a countable set $X$ enumerated as
$\{x_1,x_2,x_3, \dots \}$, we define
the density of $A$ in $X$ --- if it exists --- as
\begin{equation}
\rho_{X}(A) = \lim_{n\to\infty}\ \frac{|A\cap \{x_1, x_2, \dots ,  x_n\}|}{n} \,.
\label{eq:defdenZ}
\end{equation}
For $X = \mathbb{N}$, we usually write $\rho_\mathbb{N}(A)$ as $\rho(A)$.
For $A = \NE$ or $A = \NO$, we have $\rho(A) = \half$, as might be expected. 
This is consistent with our intuitive notion that $50\%$ of the natural numbers
are even and $50\%$ are odd.

Let us now rearrange the natural numbers into a set $F$ such that there are
\emph{twice as many even as odd numbers in $F$}.
We reorder $\mathbb{N}$ so that each odd number is followed by two even ones:
$$
F = \{ 1, 2, 4, 3, 6, 8, 5, 10, 12,\ \dots\ , 2n-1, 4n-2, 4n, \dots \} \,.
$$
It is easy to see that $\rho_F(\NE) = \frac{2}{3}$ and $\rho_F(\NO) = \frac{1}{3}$.
Proceeding further, we can construct a set $H$ in which the $n$-th odd number is followed by
$n$ even numbers.  We find that $\rho_H(\NE)=1$,
so that ``almost all the elements of $H$ are even''.
 
These examples make it clear that density depends strongly on the ordering of the reference set.
Our intuition is guided by the usual (natural) ordering of the natural numbers and the alternation
between odd and even numbers leads us to the conclusion that, somehow, they are equal in number,
each comprising ``half'' of the set of natural numbers. Density relative to $\mathbb{N}$ is consistent 
with this intuition.

With the ordering $\{0, +1, -1, +2, -2, \ldots \}$ of the integers,
the densities defined by (\ref{eq:defdenZ}) are
$$
\rho_\ZZ(\ZE) = \half \,, \qquad \rho_\ZZ(\ZO) = \half \,.
$$
We will prove that, for the rational numbers with the Calkin-Wilf and
Stern-Brocot orderings defined below,
\begin{equation}
\rho_\QQ(\QE) = \rho_\QQ(\QO) = \rho_\QQ(\QN) = \tfrac{1}{3} \,.
\label{eq:DenOneThird}
\end{equation}

%%%%%%%%%%%%%%%%%%%%%%%%%%%%%%%%%%%%%%%%%%%%%%%%%%%%%%%%%%%%%%%

\section{Partitioning the rationals.}
\label{sec:paritioning}

%%%%%%%%%%%%%%%%%%%%%%%%%%%%%%
%  TABLE: Three parity classes
%%%%%%%%%%%%%%%%%%%%%%%%%%%%%%
\begin{table}[h]
\begin{center}
\caption{Addition table (left) and multiplication table (right) for $\QQ$.
The entry `\emph{any}' indicates that the result may be in any of the three parity classes.}
\label{tab:qaddmul}
   
\bigskip
\begin{minipage}[c]{0.45\textwidth}
\begin{center}
\begin{tabular}{||c||c|c||c||}   
   \hline \hline 
   $\boldsymbol{+}$      &   $\mathbf{even}$  &  $\mathbf{odd}$   &  \cellcolor{Gray}  $\mathbf{none}$  \\
   \hline \hline
   $\mathbf{even}$   &   $even$  &  $odd$   &  \cellcolor{Gray}  $none$  \\ 
   $\mathbf{odd}$    &   $odd$   &  $even$  &   \cellcolor{Gray}  $none$  \\ 
   \hline\hline
  \cellcolor{Gray}   $\mathbf{none}$   &  \cellcolor{Gray}   $none$  &  \cellcolor{Gray}  $none$  &  \cellcolor{Gray}  $\underline{any}$  \\
   \hline \hline   
\end{tabular}   
\end{center}
\end{minipage}
\hspace{3mm}
\begin{minipage}[c]{0.45\textwidth}
\begin{center}
\begin{tabular}{||c||c|c||c||}   
   \hline \hline 
   $\boldsymbol{\times}$  &  $\mathbf{even}$   &   $\mathbf{odd}$   &  \cellcolor{Gray}  $\mathbf{none}$  \\
   \hline \hline
   $\mathbf{even}$    &  $even$   &   $even$  & \cellcolor{Gray}   $\underline{any}$   \\
   $\mathbf{odd}$     &  $even$   &   $odd$   &  \cellcolor{Gray}  $none$  \\ 
   \hline\hline
  \cellcolor{Gray}   $\mathbf{none}$    &   \cellcolor{Gray}  $\underline{any}$   &  \cellcolor{Gray}   $none$  &  \cellcolor{Gray}  $none$  \\ 
   \hline \hline   
\end{tabular}   
\end{center}
\end{minipage}

\end{center}
\end{table}   
%%%%%%%%%%%%%%%%%%%%%%%%%%%%%%

In Table~\ref{tab:qaddmul} we show the results of adding and multiplying
numbers from the three parity classes.  The most important thing
to notice is that, if we confine attention to only the even and odd
rationals, the tables are identical to the addition and multiplication
tables for $\ZZ$ (Table~\ref{tab:AddMultZ}).  The entry
\emph{``any''} in the tables indicates a number that is a ratio of two
even numbers and that may, after reduction, be in any of the
three parity classes.
Examining the left panel of Table~\ref{tab:qaddmul}, we see that $(\QE,+)$ 
is an additive (normal) subgroup of $(\QQ,+)$. 
In Table~\ref{tab:qaddmul} (right panel) we show the results of
multiplying numbers from the three parity classes.
Restricting attention to the even and odd rationals only --- omitting those with
no parity --- we define
\begin{equation} 
\QP := \QE \uplus \QO  \,.
\end{equation} 
This is the set of all rationals whose denominators are odd numbers in $\ZZ$. 
It is closed under addition and multiplication and forms a
commutative subring of the field $\QQ$.
Moreover, since there are no divisors of zero, $\QP$
is an integral domain \cite{DuFo}.   %(Dummit and Foote, 2004).
Although $\QP$ is not an ideal of $\QQ$ (fields do not have proper ideals),
it is a (normal) subgroup of $(\QQ,+)$. So, we may enquire about its index 
$[\QQ : \QP]$ and its quotient group $\QQ / \QP$.%
%%%   \footnote{$\QP$ is also the ring of rational-valued 2-adic integers,
%%%   $\ZZ_2\cap\QQ$.}

% $\QE$ is a subring of $\QQ$ and an ideal of $\QP$.

%%  {\color{red}
%%  {***   DISCUSS HERE THE ISSUE OF ``FRACTIONAL INDEX'':
%%  $\mathbf{Qbf_P}$ comprises `TWO-THIRDS' OF $\QQ$ BUT, SINCE IT IS A SUBGROUP,
%%  THIS SEEMS TO BE IN CONFLICT WITH LAGRANGE'S THEOREM (AT LEAST, IN SPIRIT).  ***}
%%  
%%  $$
%%  \displaystyle{\frac{\QP}{\QE}} \quad\mbox{has cosets}\quad 
%%  \{ \QE , \QE+1 \}
%%  $$
%%  
%%  }  %  End red.

Somewhat out of context, we mention the easily-proved observation that
all three parity classes, $\QE$, $\QO$ and $\QN$, 
are (topologically) dense in the rationals.

\subsection*{2-Adic valuation and the ``degree of evenness''.}

\begin{center}
\emph{All multiples of 2 are even, but}  %  \\
\emph{some are more even than others.}
\end{center}

The $p$-adic valuation --- or $p$-adic order \cite{Ka} --- of an integer $n$ is the function
$$
\nu_p(n) = \begin{cases}
\max\{k\in\NN : p^k \mid n \} & \mbox{for\ } n \neq 0 \\
\infty                               & \mbox{for\ } n   =  0   
\end{cases}
$$
This is extended to the rational numbers $m/n$:
$$
\nu_p\left(\frac{m}{n}\right) = \nu_p(m) - \nu_p(n) \,. 
$$
It is easily proved that, for any rationals $q_1$ and $q_2$, 
\begin{equation}
\nu_p(q_1+q_2) \ge \min\{\nu_p(q_1),\nu_p(q_2)\} \,,
\label{eq:nuineq}
\end{equation}
with equality holding if $\nu_p(q_1) \neq \nu_p(q_2)$.

We shall be concerned exclusively with the case $p = 2$. 
We note that $\QP = \{q\in\QQ : \nu_2(q) \ge 0 \}$
and $\QE = \{q\in\QQ : \nu_2(q)  >  0 \}$.
The ``degree of evenness'' of a number can be expressed in terms of the 2-adic valuation. 
For an integer $n$, the 2-adic valuation is the largest natural number $k$ such that
$2^k$ divides $n$. It is normally written $\nu_2(n)$ or $\ord_2(n)$.
For even integers, $\nu_2(n)>0$; for odd integers, $\nu_2(n)=0$.
By convention, $\nu_2(0)=\infty$ (since zero is divisible by every power of $2$).

If we write a rational number $q$ in the form $2^k(2m+1)/(2n+1)$ with $k\in\ZZ$,
then $\nu_2(q) = k$.  Odd rationals have order $0$ and rationals with no parity
have negative 2-adic order. In particular, half integers have 2-adic order equal to $-1$.
In summary,
\begin{eqnarray*}
\mbox{For $q$ rational with parity \emph{even},\ } & \nu_2(q) > 0 \,, \\ 
\mbox{For $q$ rational with parity \emph{odd},\  } & \nu_2(q) = 0 \,, \\ 
\mbox{For $q$ rational with parity \emph{none},\ } & \nu_2(q) < 0 \,.
\end{eqnarray*}
 
The 2-adic order clearly identifies the parity classes of the rationals, and it
provides a means of partitioning them into finer-grain parity classes. 
The resulting partition reveals a wealth of algebraic structure.
For all $k\in\ZZ$, we define the set of all rational numbers with 2-adic valuation $k$:
$$
\Qbf_k = \{q\in\QQ : \nu_2(q) = k \} \qquad\mbox{and}\qquad
\Qbf_\infty = \{\ 0\ \} \,.
$$
The union of all the $\Qbf$-sets comprises the entire set of rationals
$$
\QQ = \{\ 0\ \} \uplus \biguplus_{k=-\infty}^\infty \Qbf_k \,.
$$
We illustrate the subsets $\Qbf_k$ in Fig.~\ref{fig:QP-Plane-Lin}.
The vertical axis is the 2-adic valuation $\nu_2$. Each subset $\Qbf_k$
is represented by a horizontal dotted line.
We remark that all odd rationals are in $\Qbf_0$ and 
all even rationals are in $\bigcup_{k>0}\Qbf_k$.
For all $K\in\ZZ$, we define
\begin{equation}
\QK \equiv \{\ 0\ \} \uplus \biguplus_{k\ge K} \Qbf_k
\label{eq:QK}
\end{equation}
and observe that $\QK$ is a subgroup of $\QQ$.  We write this as $\QK\ \subgrp\ \ \QQ$.
Note, in particular, that $\QQ_0 = \QP$ and $\QQ_1 = \QE$.
%
%  \subsection*{An infinite chain or ladder.}
%
There is an infinite chain of subgroups, starting with $\QQ_\infty = \{\ 0\ \}$
and extending through all the $\QQ_K$ groups to the full group of rationals:
$$
\QQ_\infty = \{\ 0\ \} \ \subgrp\ \ \cdots\ \subgrp\ \ \QQ_{2}\ \subgrp\ \ \QQ_{1}\
\subgrp\ \ \QQ_{0}\ \subgrp\ \ \QQ_{-1}\ \subgrp\ \ \QQ_{-2}\ \subgrp\ \ \cdots\ \subgrp\ \ \QQ \,.
$$

\subsection*{Dyadic rational numbers.}

A dyadic rational is a number that can be expressed as a fraction whose
denominator is a power of two.  The usual definition of the dyadic
rational numbers \cite[pg.~122]{Ba} is  %% (Bajnok, 2001, p.~122) is
$$
\DD = \left\{ \frac{z}{2^m} : z\in\ZZ, m\in\ZZ \right\} \,.
$$
Note that the integers are included in the set of dyadic rationals. 
A convenient alternative definition is
$$
\DD = \{ 2^k(2\ell-1) : k\in\ZZ, \ell\in\ZZ \} \uplus \{\ 0\ \} \,,
$$
since all the numbers of the form $2^k(2\ell-1)$ are in $\Qbf_k$.
Moreover, the expression of each number in this form is unique.
We also define the sets
$$
\Dbf_k = \{ 2^k(2\ell-1) : \ell\in\ZZ \} \qquad\mbox{and}\qquad
\Dbf_\infty = \{\ 0\ \} \,,
$$
and note that $\DD = \biguplus_{k} \Dbf_k \,\uplus \,\Dbf_\infty$.
We see that $\Dbf_0 = \ZO$ and $\biguplus_{k>0}\Dbf_k = \ZE$.

The dyadics correspond to all real numbers with finite binary expansions,
and also to the set of surreal numbers born on finite days \cite{Co}.
The dyadic rational numbers form a ring between the ring of
integers and the field of rational numbers:
$$
\ZZ \ \subgrp\ \DD \ \subgrp\ \QQ \,.
$$
%  This ring is sometimes denoted $\mathbb{Z}[\tfrac{\,1\,}{\,2\,}]$.

The sets $\Dbf_k$ are indicated in Fig.~\ref{fig:QP-Plane-Lin} by the marked points in $\Qbf_k$.
The vertical axis is the 2-adic valuation $\nu_2$.
For each $k$,  $\Dbf_k\subset\Qbf_k$.
The (black) dots at level $k=0$ are the odd integers.
The (blue) dots at positive $k$-levels are the even integers.
The (red) dots at each negative level $k<0$ are the dyadic fractions,
with odd numerator and denominator $2^{k}$.
Zero sits, like an angel, on top of the tree.

By analogy with the definition (\ref{eq:QK}) of the $\QQ_K$-sets,
we construct a countable infinity of subgroups of $\DD$:
$$
\DD_K := \{\ 0\ \} \uplus \biguplus_{k\ge K} \Dbf_k \,.
$$
Particular cases of the $\DD$-sets include
$$
\DD_{-\infty} = \DD \,, \qquad \DD_{-1} = \half\ZZ \,, \qquad 
\DD_0 = \ZZ \,, \qquad \DD_1 = \ZE \,, \qquad \DD_\infty = \{\ 0\ \} \,.
$$
There is an infinite chain of subgroups starting with $\DD_\infty$
and extending through all the $\DK$ groups to the full group of dyadic rationals:
$$
\DD_\infty = \{\ 0\ \} \ \subgrp\ \ \cdots\ \subgrp\ \ \DD_{2}\ \subgrp\ \ \DD_{1}\
\subgrp\ \ \DD_{0}\ \subgrp\ \ \DD_{-1}\ \subgrp\ \ \DD_{-2}\ 
\subgrp\ \ \cdots\ \subgrp\ \ \DD \,.
$$

%%%%%%%%%%%%%%%%%%%%%%%%%%%%%%
\begin{figure}[t]
\begin{center}
\includegraphics[width=0.75\textwidth]{./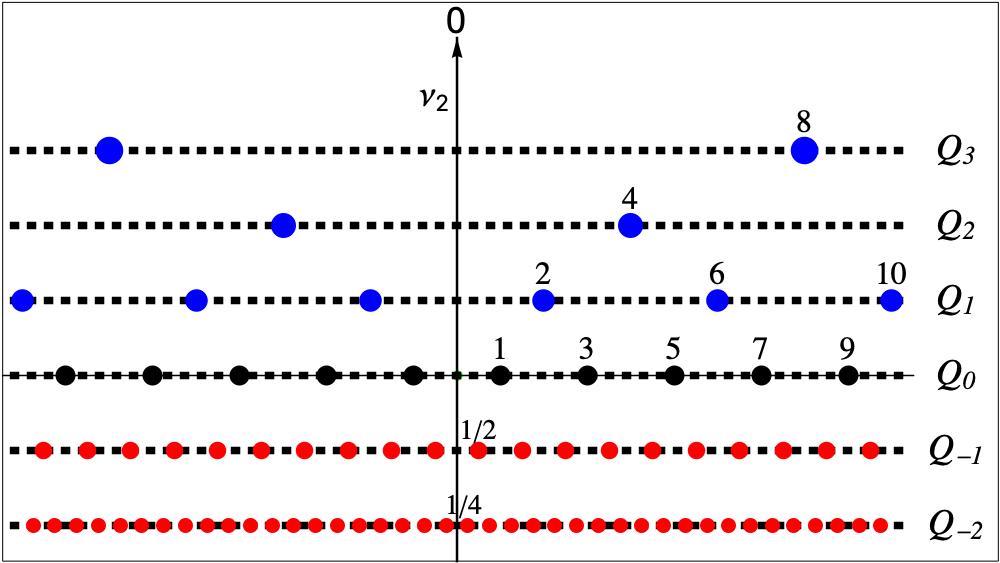}
\caption{Partition of the rational numbers.
The vertical axis is the 2-adic valuation $\nu_2$.
Each (dense) subset $\Qbf_k$ is represented by a horizontal dotted line.
The sets $\Dbf_k$ are indicated by the marked points in $\Qbf_k$.
The totality of these comprises the dyadic rationals $\DD$.}
\label{fig:QP-Plane-Lin}
\end{center}
\end{figure}
%%%%%%%%%%%%%%%%%%%%%%%%%%%%%%

Readers familiar with the theory of $p$-adic numbers may wish to show that
$\QP$ is the ring of rational-valued 2-adic integers, $\QQ\cap\ZZ_2$,
and the dyadic rational numbers may be expressed as
$$
\mathbb{D} = \QQ \cap \bigcap_{p\ \mathrm{odd}} \ZZ_p \,.
$$

\section{Cosets of $\QP$ in $\QQ$.}
\label{sec:cosets}

In the following section we show that any two rationals $q_1$ and $q_2$ with distinct,
negative 2-adic orders are representatives of distinct cosets: $q_1 + \QP \neq q_2 + \QP$.
Thus, if $q_1 + \QP = q_2 + \QP$ then $\nu_2(q_1)=\nu_2(q_2)$. Consequently, there is
at least one coset for each $k<0$ and therefore an infinite number of cosets. However,
it is clear that 
$\nu_2(q_1) = \nu_2(q_2)$ does not imply equality of cosets; consider, for example,
$q_1 =\frac{1}{4}$ and $q_2 = \frac{3}{4}$, since $(q_2-q_1)=\half\not\in \QP$.

We now investigate the cosets $q + \QP$ in $\QQ / \QP$.
First, we note that if $q \in \Qbf_{-k}$ then $q + \QP \subset \Qbf_{-k}$,
but there is no guarantee that $q + \QP$ is equal to $\Qbf_{-k}$.
Suppose $q_1$ and $q_2$ are in $\Qbf_{-k}$ for some $k>0$. If they represent the same coset
then $q_1 - q_2 \in \QP$. However, it is easily seen that $\nu_2(q_1-q_2)$ may assume any value
greater than $-k$:
%%%%%%%%%%%%%%%%%%%%%%%%%%%%%
%%  $$
%%  q_1 = \frac{o_1}{2^k o_2} \quad\mbox{and}\quad q_2 = \frac{o_3}{2^k o_4}
%%  \qquad \implies \qquad
%%  q_1 - q_2 = \frac{o_1 o_4 - o_2 o_3 } { 2^k o_2 o_4 } \,.
%%  $$
%%  This is a ratio of even integers and must be reduced. We assume that
%%  $o_1 o_4 - o_2 o_3 = 2^\ell o_5$ with $\ell > 0$, and $o_2 o_4 = o_6$, so that
%%  $$
%%  q_1 - q_2 = \frac{2^\ell o_5}{2^k o_6} \in \Qbf_{\ell-k} \,.
%%  $$
%%%%%%%%%%%%%%%%%%%%%%%%%%%%%
$$
q_1 - q_2 = 2^{\ell-k} \frac{o_1}{o_2} \quad\mbox{for}\quad \ell > 0 \,.
$$
Thus, $q_1 - q_2$ may be in any of the following sets:
$$
\Qbf_{-k+1} \,, \Qbf_{-k+2} \,, \Qbf_{-k+3}\,, \dots \,, \Qbf_{-1} \,, \QP \,.
$$
Clearly, $q_1 - q_2 \in \QP$ if and only if $\ell \ge k$, whence
$$
q_1 + \QP = q_2 + \QP \qquad \mbox{\textrm{iff}} \qquad \ell \ge k \,.
$$

For each $k>0$, we define a set of values 
\begin{equation}
q_{k}^{\ell} = 2^{-k}(2\ell-1) \in \Qbf_{-k}
\quad\mbox{for}\quad  \ell=1,2,3, \dots , 2^{k-1} \,.
\label{eq:qkl}
\end{equation}
We note that these are the first $2^{k-1}$ positive values in $\Dbf_{-k}$.
We show in the following section that these are representatives of $2^{k-1}$
cosets, which are all distinct and which provide a disjoint partition of $\Qbf_{-k}$.
This analysis provides explicit expressions for each of the infinite set
of cosets of $\QP$ in $\QQ$:
\begin{equation}
 q_{k}^{\ell}+\QP \,, \qquad \ell=1, 2, 3, \dots 2^{k-1} \,, \quad k=1,2, \dots  \,. 
\end{equation}

%%%%%%%%%%%%%%%%%%%%%%%%%%%%%%%%%%
\subsection*{Scale invariance of the structure.}

%%%%%%%%%%%%%%%%%%%%%%%%%%%%%%
\begin{figure}[h]
\begin{center}
\includegraphics[width=0.75\textwidth]{./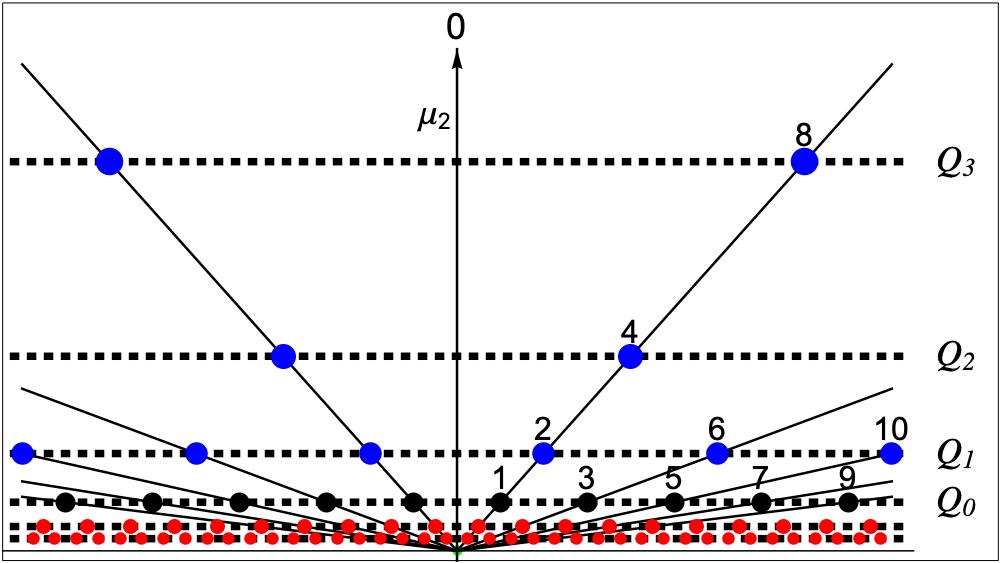}
\caption{Partition of the rational numbers. 
The vertical axis is $\mu_2 = 2^{\nu_2}$.
Each subset $\Qbf_k$ is represented by a horizontal dotted line.
The sets $\Dbf_k$ are indicated by the marked points in $\Qbf_k$.
The dyadic numbers in $\DD$ all appear on a pencil of lines emanating
from the origin.}
\label{fig:QP-Plane-Log}
\end{center}
\end{figure}
%%%%%%%%%%%%%%%%%%%%%%%%%%%%%%

We notice that the diagram in Fig.~\ref{fig:QP-Plane-Lin} has a scaling invariance:
if the horizontal axis is stretched by a factor of 2 and the diagram translated
one unit in the vertical, the dyadic rationals occupy the same set of points.
We have chosen to analyse the quotient group $\QQ/\QP$. However, a similar analysis
could be done for any subgroup $\QK$, with directly analogous results.

In Fig.~\ref{fig:QP-Plane-Log}, we re-plot Fig.~\ref{fig:QP-Plane-Lin} 
with vertical axis $\mu_2 = 2^{\nu_2}$.  The dyadic numbers in $\DD$ now appear
on a pencil of lines with slopes $1/(2\ell-1)$, emanating from the origin.

%%  The complete structure of subgoups $\QK$ and $\DK$ may be summarized in a diagram:
%%  $$
%%  \begin{small}
%%  \begin{matrix}
%%        &       &       & \subgrp & \QQ_{k} & \subgrp & \cdots 
%%  & \subgrp & \QQ_{0} & \subgrp 
%%  & \cdots & \subgrp & \QQ_{-k} & \subgrp & \cdots & \subgrp & \mathbb{Q}
%%  \cr
%%  \null & \null & \iddots &       &   |   &           & 
%%  &           &  |   &           
%%  &        &           &   |   &           &        &           &    |      
%%  \cr
%%  \{\ 0\ \} & {\subgrp} &  \null &  \null & \subgrpup 
%%   & \null &  \null &  \null & \subgrpup 
%%   & \null & \null &  \null & \subgrpup  & \null & \null &  \null & \subgrpup 
%%  \cr
%%  \null & \null & \ddots &     &   |   &           &
%%  &           &  |  & 
%%  &        &           &   |   &           &        &           &    |      
%%  \cr
%%  \null & \null & \null & \subgrp & \DD_{k} & \subgrp & \cdots 
%%  & \subgrp & \DD_{0} & \subgrp 
%%  & \cdots & \subgrp & \DD_{-k} & \subgrp & \cdots & \subgrp & \mathbb{D}
%%  \end{matrix}
%%  \end{small}
%%  $$

%%  %%%%%%%%%%%%%%%%%%%%%%%%%%%%%%
%%  \begin{figure}[h]
%%  \begin{center}
%%  \includegraphics[width=0.99\textwidth]{./QP-Chains.jpg}
%%  \caption{Double chain of subgroups of $\QQ$.}
%%  \label{fig:QP-Chains}
%%  \end{center}
%%  \end{figure}
%%  %%%%%%%%%%%%%%%%%%%%%%%%%%%%%%

%%%%%%%%%%%%%%%%%%%%%%%%%%%%%%%%%%
\subsection*{Density of $\Qbf_{k}$: a heuristic discussion.}

The set $\Qbf_{-1} = \half + \QP$ is a coset of $\QP$. It can be
visualized as a copy of $\QP$ shifted by a distance $\half$.
We argue heuristically that $\Qbf_{-1}$ is ``as dense as $\QP$''.

More generally, for any $k$, there is a natural correspondence between
elements of $\Qbf_{k}$ and elements of $\Qbf_{k-1}$:
$$
\frac{1}{2^{k}}  \left(\frac{2m+1}{2n+1}\right) \in \Qbf_{k} \longleftrightarrow 
\frac{1}{2^{k-1}}\left(\frac{2m+1}{2n+1}\right) \in \Qbf_{k-1}  \,.
$$
Thus, $\Qbf_{k-1}$ may be visualized as a compressed version of $\Qbf_{k}$.
Since $\Qbf_{k-1}$ is ``twice as dense as $\Qbf_{k}$'', we may argue that we
should have twice as many cosets in $\Qbf_{k-1}$ as there are in $\Qbf_{k}$.
This is consistent with what is proved rigorously below. 

%%%%%%%%%%%%%%%%%%%%%%%%%%%%%%%%%%
%%%%%%%%%%%%%%%%%%%%%%%%%%%%%%%%%%
%%  \input Q-Lemmas.tex
\section{Formal proof of the coset structure for $\QP$.}
\label{sec:proofs}

In this section, we give rigorous proofs of some of the results
considered heuristically in the discussion above.
Lemmas 1 and 2 give conditions for cosets to be equal. Proposition 1
gives explicit representatives $q_k^\ell$ for each of the distinct
cosets.  In the following, we abbreviate the $2$-adic valuation $\nu_2$ to $\nu$.

% \textbf{Lemma~1.}
\begin{lemma}
Suppose $q_1, q_2 \in \QN$ and $q_1+\QP = q_2+\QP$.  Then $\nu(q_1)=\nu(q_2)$.
\end{lemma}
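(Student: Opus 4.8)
The plan is to argue by contraposition, converting the hypothesis about cosets into a statement about 2-adic valuations and then invoking the sharpened ultrametric inequality (\ref{eq:nuineq}). First I would observe that the coset equality $q_1+\QP = q_2+\QP$ is equivalent to $q_1 - q_2 \in \QP$, which by the characterisation $\QP = \{q : \nu(q)\ge 0\}$ noted earlier means simply $\nu(q_1-q_2)\ge 0$. So the entire lemma reduces to showing that, for $q_1,q_2\in\QN$, the condition $\nu(q_1-q_2)\ge 0$ forces $\nu(q_1)=\nu(q_2)$.

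The key step is to suppose, for contradiction, that $\nu(q_1)\neq\nu(q_2)$. The inequality (\ref{eq:nuineq}) holds with \emph{equality} precisely when the two valuations differ, and since negation leaves the valuation unchanged, $\nu(-q_2)=\nu(q_2)$, this gives $\nu(q_1-q_2)=\min\{\nu(q_1),\nu(q_2)\}$. Now the hypothesis $q_1,q_2\in\QN$ means exactly that both $\nu(q_1)$ and $\nu(q_2)$ are negative, so their minimum is negative. Hence $\nu(q_1-q_2)<0$, contradicting $\nu(q_1-q_2)\ge 0$. Therefore $\nu(q_1)=\nu(q_2)$, as required.

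The argument is genuinely short, so there is no serious obstacle; the one thing to get right is to use the \emph{equality clause} of (\ref{eq:nuineq}) rather than the bare inequality. The bare bound $\nu(q_1-q_2)\ge\min\{\nu(q_1),\nu(q_2)\}$ only supplies a negative lower bound and yields no contradiction. It is precisely the fact that a strict difference of valuations forces equality in the ultrametric bound --- the standard ``isosceles'' phenomenon for non-Archimedean valuations --- that pins $\nu(q_1-q_2)$ to the negative minimum and produces the contradiction. I would therefore make sure to state the use of this equality case explicitly.
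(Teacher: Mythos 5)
Your proof is correct and matches the paper's own argument: both proceed by assuming the valuations differ, invoke the equality case of the ultrametric inequality (\ref{eq:nuineq}) (noting $\nu(-q_2)=\nu(q_2)$) to pin $\nu(q_1-q_2)$ to the negative minimum, and conclude $q_1-q_2\notin\QP$, contradicting coset equality. Your explicit emphasis on needing the equality clause rather than the bare inequality is exactly the right point of care.
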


\begin{proof}
For the cosets to be equal, we must have $q_1-q_2 \in\QP$. Suppose that
$\nu(q_1)=-k$ and $\nu(q_2)=-\ell$ with $k\ne\ell\in\NN$. Without loss of generality,
we may assume that $\ell = k+d$ with $d>0$. 
%%%   Then 
%%%   $$
%%%   q_1-q_2 = \frac{1}{2^k}   \left(\frac{2n_1+1}{2m_1+1}\right) 
%%%           - \frac{1}{2^\ell}\left(\frac{2n_2+1}{2m_2+1}\right) =
%%%   \frac{1}{2^\ell} \left(\frac{ 2^d o_1 - o_2 }{o_3}\right) \,,
%%%   $$
%%%   where $o_1$, $o_2$ and $o_3$ are odd integers;
%%%
Then, using (\ref{eq:nuineq}), 
\begin{equation}
\nu(q_1-q_2) = \min(\nu(q_1), \nu(-q_2)) = \min(\nu(q_1), \nu(q_2)) = \min(-\ell, -k) = -\ell \,.
\nonumber
\end{equation}
%%%
Thus, $q_1-q_2 \in \Qbf_{-\ell}$,
so that $q_1+\QP \ne q_2+\QP$.
Consequently, a necessary condition for equality of the cosets is that $q_1$ and $q_2$
have the same 2-adic valuation, $\nu(q_1)=\nu(q_2)$. 
%%  \hfill \qedsymbol
\end{proof}

%  We can beef up to equivalence, though it is less satisfactory. 
The next lemma strengthens this to a necessary and sufficient condition.

% \textbf{Lemma~2.}
\begin{lemma}
Let $q_1$ and $q_2$ be in $\QN$.  Then $q_1+\QP=q_2+\QP$ if, and only
if, $\nu(q_1)=\nu(q_2)=-k$ for some $k\in \NN$ and
$\nu(a_1b_2-a_2b_1))\ge k$ where $q_1={a_1}/{(2^kb_1)}$ and
$q_2={a_2}/{(2^kb_2)}$.
\end{lemma}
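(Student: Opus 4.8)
The plan is to prove both directions by combining Lemma 1 with a direct computation of $\nu(q_1-q_2)$ in the critical case where the valuations agree. The forward direction is immediate: if the cosets are equal then $q_1-q_2\in\QP$, which means $\nu(q_1-q_2)\ge 0$. By Lemma 1, equality of cosets already forces $\nu(q_1)=\nu(q_2)=-k$ for some $k\in\NN$. So I may write $q_1=a_1/(2^kb_1)$ and $q_2=a_2/(2^kb_2)$ with $a_i,b_i$ odd (since $\nu(q_i)=-k$ means, after full reduction, exactly $k$ factors of $2$ survive in the denominator). The task reduces to relating the condition $\nu(q_1-q_2)\ge 0$ to the stated condition $\nu(a_1b_2-a_2b_1)\ge k$.

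The key step is the algebraic identity
\begin{equation}
q_1-q_2 = \frac{a_1}{2^kb_1}-\frac{a_2}{2^kb_2} = \frac{a_1b_2-a_2b_1}{2^k\,b_1b_2} \,.
\nonumber
\end{equation}
Taking the $2$-adic valuation of both sides and using $\nu(2^k)=k$ together with $\nu(b_1b_2)=0$ (as $b_1,b_2$ are odd), I obtain $\nu(q_1-q_2)=\nu(a_1b_2-a_2b_1)-k$. Hence $q_1-q_2\in\QP$, i.e. $\nu(q_1-q_2)\ge 0$, holds precisely when $\nu(a_1b_2-a_2b_1)\ge k$. Since $q_1+\QP=q_2+\QP$ is by definition equivalent to $q_1-q_2\in\QP$, this establishes the stated biconditional once the common-valuation reduction is in place.

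For the converse I must be careful about the logical structure: the hypothesis as stated already assumes $\nu(q_1)=\nu(q_2)=-k$, so I do not need Lemma 1 here — I simply run the identity above in reverse. Given $\nu(a_1b_2-a_2b_1)\ge k$, the valuation computation yields $\nu(q_1-q_2)\ge 0$, so $q_1-q_2\in\QP$ (recalling $\QP=\{q:\nu_2(q)\ge 0\}$ from the text), whence the cosets coincide. The main obstacle — really the only subtle point — is justifying the normal forms $q_i=a_i/(2^kb_i)$ with $a_i,b_i$ odd and verifying that $\nu(b_1b_2)=0$; this is where I must invoke that $q_i\in\QN$ with $\nu(q_i)=-k$ forces exactly $k$ powers of $2$ in the reduced denominator and none in the numerator, so the cross-difference $a_1b_2-a_2b_1$ is the genuine carrier of the remaining parity information. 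Everything else is the valuation arithmetic of (\ref{eq:nuineq}) applied to an explicit fraction.
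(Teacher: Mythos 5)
Your proof is correct and follows essentially the same route as the paper's: invoke Lemma 1 for the forward direction to force equal valuations $-k$, write the normal forms $q_i = a_i/(2^k b_i)$ with $a_i, b_i$ odd, and reduce both directions to the single identity $q_1 - q_2 = (a_1b_2 - a_2b_1)/(2^k b_1 b_2)$. Your observation that this yields the exact equality $\nu(q_1-q_2) = \nu(a_1b_2-a_2b_1) - k$ is a slightly cleaner packaging of what the paper does in two separate passes, but it is the same argument.
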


\begin{proof}
Let $q_1$ and $q_2$ be in $\QN$.  Suppose that $q_1+\QP=q_2+\QP$.
Lemma 1 tells us that $\nu(q_1)=\nu(q_2)=-k$ for some $k\in \NN$.   We
may thus write 
\begin{equation}
q_1= \frac{a_1}{2^kb_1} \quad \mathrm{and} \quad
q_2=\frac{a_2}{2^kb_2}
\label{eq:0}
\end{equation}
where
$a_i, b_i$ are odd, and so
\begin{equation}
  q_1-q_2 = \frac{a_1 b_2 - a_2 b_1}{2^k b_1 b_2}.
  \label{eq:00}
\end{equation}
By hypothesis, this is an element of $\QP$ which implies
  $2^k |(a_1b_2-a_2b_1)$, as asserted.

  For the converse, if $\nu(q_1)=\nu(q_2)=-k$, then \eqref{eq:0}, and
  hence \eqref{eq:00}, holds.   The condition $\nu(a_1b_2-a_2b_1)\ge
  k$ then implies $\nu(q_1-q_2)\ge 0 $ and so $q_1-q_2\in \QP$.
  %%  \hfill \qedsymbol
\end{proof}

%   \end{document}

% \textbf{Proposition 1.}
\begin{proposition}
  For each $k\in\NN$, let $q_k^\ell = ({2\ell-1})/{2^k} \in \Qbf_{-k}$ for
  $\ell=1,\ldots, 2^{k-1}$. These numbers generate $2^{k-1}$ distinct $\QP$-cosets,
  which comprise all the cosets of $\QP$ by elements of $\Qbf_{-k}$. 
\end{proposition}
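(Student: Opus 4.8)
The plan is to reduce the entire statement to the congruence criterion of Lemma 2, which converts equality of cosets into a divisibility condition on odd integers modulo $2^k$. The guiding idea is that the cosets of $\QP$ meeting $\Qbf_{-k}$ are in bijection with the odd residue classes modulo $2^k$, of which there are exactly $2^{k-1}$; the representatives $q_k^\ell = (2\ell-1)/2^k$ are chosen precisely so that their numerators $2\ell-1$, as $\ell$ ranges over $1,\dots,2^{k-1}$, run through a complete set of those odd residues. So the proposition splits into a distinctness claim and an exhaustiveness claim, both handled through Lemma 2.

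First I would prove that the $2^{k-1}$ cosets are distinct. Applying Lemma 2 to $q_k^\ell$ and $q_k^{\ell'}$, we have $a_1 = 2\ell-1$, $a_2 = 2\ell'-1$ and $b_1 = b_2 = 1$, so that $a_1 b_2 - a_2 b_1 = 2(\ell-\ell')$. By Lemma 2 the two cosets coincide if and only if $\nu\bigl(2(\ell-\ell')\bigr) \ge k$, that is $2^{k-1} \mid (\ell-\ell')$. Since $1 \le \ell, \ell' \le 2^{k-1}$ we have $|\ell-\ell'| < 2^{k-1}$, which forces $\ell = \ell'$. Hence the cosets $q_k^\ell + \QP$ are pairwise distinct.

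Next I would show that these cosets exhaust all cosets of $\QP$ represented by elements of $\Qbf_{-k}$. Given an arbitrary $q \in \Qbf_{-k}$, since $\nu(q) = -k$ we may write $q = a/(2^k b)$ with $a$ and $b$ odd. As $b$ is odd it is a unit modulo $2^k$, so $a b^{-1}$ is an odd residue modulo $2^k$. The odd residues modulo $2^k$ are exactly $\{1,3,5,\dots,2^k-1\} = \{2\ell-1 : \ell = 1,\dots,2^{k-1}\}$, so there is a unique $\ell$ in this range with $2\ell-1 \equiv a b^{-1} \pmod{2^k}$, equivalently $2^k \mid \bigl(a - (2\ell-1)b\bigr)$. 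Writing $q_k^\ell = (2\ell-1)/(2^k\cdot 1)$, Lemma 2 (with $b_2 = 1$) gives $\nu\bigl(a\cdot 1 - (2\ell-1)b\bigr) \ge k$ and therefore $q + \QP = q_k^\ell + \QP$.

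Combining the two parts, the $2^{k-1}$ representatives $q_k^\ell$ yield distinct cosets covering every coset meeting $\Qbf_{-k}$; since each coset $q + \QP$ with $q \in \Qbf_{-k}$ lies entirely inside $\Qbf_{-k}$, as observed earlier, these cosets partition $\Qbf_{-k}$. I expect no deep obstacle: the only delicate point is the bookkeeping in the exhaustiveness step, namely that the numerators $2\ell-1$ form a complete set of odd residues modulo $2^k$ and that the index range $\ell = 1,\dots,2^{k-1}$ is neither too small (needed for surjectivity) nor too large (needed for distinctness). Both facts follow from the odd residues modulo $2^k$ constituting the unit group $(\ZZ/2^k\ZZ)^\times$, which has order $2^{k-1}$.
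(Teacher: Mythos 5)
Your proof is correct, and it shares the paper's overall framework: both halves are reduced to the divisibility criterion of Lemma 2, and your distinctness argument is essentially identical to the paper's. The difference is in the exhaustiveness step. The paper, needing $2^k \mid (n + m - 2\ell m)$ for some $\ell$, substitutes $n+m=2s$ (legitimate since $n,m$ are odd), reduces to finding $\ell$ with $2^{k-1} \mid (s - \ell m)$, and gets this by pigeonhole: the map $\ell \mapsto s-\ell m \bmod 2^{k-1}$ is injective on $\{1,\dots,2^{k-1}\}$ because $m$ is odd, hence surjective, hence hits the residue $0$. You instead invert $b$ modulo $2^k$, note that $ab^{-1}$ is an odd residue, and match it with the unique $2\ell-1$ in range. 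Your route is more structural: it makes explicit the bijection between cosets meeting $\Qbf_{-k}$ and the odd residue classes modulo $2^k$, i.e.\ the unit group $(\ZZ/2^k\ZZ)^\times$, and it yields existence and uniqueness of $\ell$ in one stroke --- indeed, applying your uniqueness statement to $q = q_k^{\ell'}$ would reprove distinctness, so your second step nearly subsumes your first. The paper's pigeonhole version buys elementarity --- it never invokes modular inverses --- at the cost of the ad hoc substitution $n+m=2s$ and a counting argument. Both arguments ultimately rest on the same fact, that odd integers are units modulo every power of $2$.
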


\begin{proof}
  It is clear that $q_k^\ell = {(2\ell-1)}/{2^k}$ lies in
  $\Qbf_{-k}$.   If two of these numbers, $q_k^\ell$ and $q_k^{\ell'}$ say, generate
  the same coset, then $q_k^\ell - q_k^{\ell'} \in \QP$ and so, by Lemma 2,
  $2^k $ divides $2{\ell'}-1 - (2\ell-1)=2({\ell'}-\ell)$ and hence $2^{k-1}$ divides
  $\ell-\ell'$.  As $1\le \ell,\ell'\le 2^{k-1}$, the only way this can occur is if
  $\ell=\ell'$. This shows that all $2^{k-1}$ cosets generated by $q_k^\ell$ are distinct.

  Next, we show that these exhaust all possible cosets by elements of
  $\Qbf_{-k}$.  For this, given $q\in \Qbf_{-k}$ we need to show that
  $q+\QP= q_{k}^{\ell}+\QP$ for some $\ell\in\{1,2,\ldots 2^{k-1}\}$.  As $q = n/(2^k m)$
  for $m$ and $n$ odd, by Lemma 2, this amounts to showing
  $\nu(1.n - (2\ell-1)m) \ge k$, or that $2^k$ divides $n+m-2\ell m$.
  We may let $n+m=2s$ ($s\in\ZZ$) and examine instead whether $2^{k-1}$
  divides $s-\ell m$ for some $\ell\in\{1,\ldots,2^{k-1}\}$.

  Notice that
  \begin{align*}
  s-\ell m \equiv s-\ell' m ( \!\!\!\!\!  \mod 2^{k-1} )   &\iff   2^{k-1} \mbox{\ divides\ } m(\ell-\ell')   \\
                              & \overset{\mbox{$m$\ odd} } \iff    2^{k-1} \mbox{\ divides\ } (\ell-\ell')    \iff \ell = \ell'
  \end{align*}
  as again $1\le \ell,\ell'\le 2^{k-1}$.
  In particular, by the pigeonhole
  principle, all $2^{k-1}$ possible $2^{k-1}$-remainders, including 0, are
  contributed by $s-\ell m$, $\ell =1,\ldots,2^{k-1}$.   So $2^{k-1}$ divides
  $s-\ell m$ for some $\ell$ and, for this $\ell$ we have $q\in q_k^\ell+\QP$, as stated.
% \hfill \qedsymbol
\end{proof}

%%%%%%%%%%%%%%%%%%%%%%%%%%%%%%%%%%%%%
%%%%%%%%%%%%%%%%%%%%%%%%%%%%%%%%%%%%%

%%%%%%%%%%%%%%%%%%%%%%%%%%%%%%%%%%%%%%%%%%%%%%%%%%%%%%%%%%%%
%  \input CW-Tree.tex

\section{Densities of the parity classes.}
\label{sec:CW-Tree}

%%%%%%%%%%%%%%%%%%%%%%%%%%%%%%
\begin{figure}[t]
\begin{center}
\includegraphics[width=0.45\textwidth]{./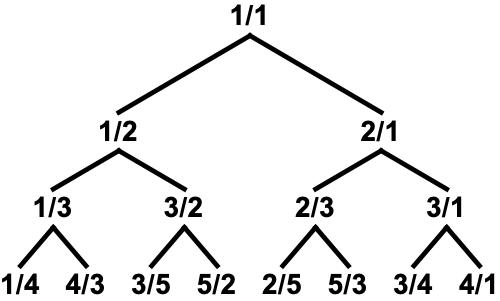}
\hspace{5mm}
\includegraphics[width=0.45\textwidth]{./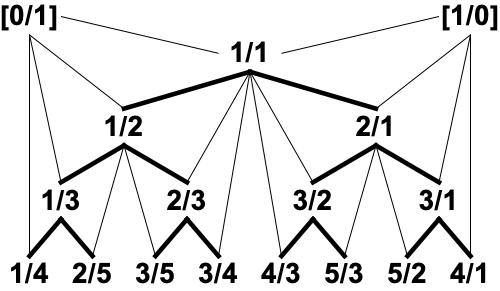}
\caption{The initial rows of the Calkin-Wilf tree (left) and Stern-Brocot tree (right).}
\label{fig:CWtreeSBtree}
\end{center}
\end{figure}
%%%%%%%%%%%%%%%%%%%%%%%%%%%%%%

There are many exhaustive sequences of rationals other than (\ref{eq:farish}), one attractive 
option being the Calkin-Wilf tree \cite{CaWi}.  %%  (Calkin \&\ Wilf, 1999).  
The Calkin-Wilf tree is complete: it includes all positive rational numbers and each such number 
occurs precisely once.  The tree starts with the root value ${1}/{1}$,
and everything springs from this root (see Fig.~\ref{fig:CWtreeSBtree}, left panel).
Each rational in the tree has two ``children'': for the entry $m/n$,
the children are $m/(m+n)$ and $(m+n)/n$.  The ``left child''
$m/(m+n)$ is always smaller that $1$ while the ``right child''
$(m+n)/n$ is always greater that $1$
(mnemonic: the \emph{children} are ``top over sum'' and ``sum over bottom'').
%%  \footnote{Mnemonic: the \emph{children} are ``top over sum'' and ``sum over bottom''.}

%%  %%%%%%%%%%%%%%%%%%%%%%%%%%%%%%
%%  \begin{figure}[h]
%%  \begin{center}
%%  \includegraphics[width=0.5\textwidth]{./CW-Parity-2.jpg}
%%  \caption{Parity transfer from parent to children in the Calkin-Wilf tree.}
%%  \label{fig:CWparity}
%%  \end{center}
%%  \end{figure}
%%  %%%%%%%%%%%%%%%%%%%%%%%%%%%%%%

The pattern of parity from one row of the Calkin-Wilf diagram to the next
is simple.  Denoting odd parity, even parity and no parity by $o$, $e$ and $n$
respectively, the parity transfer rules are as follows:
%%  \begin{itemize}
%%  \item The children of an $o$ node are $n$ and $e$ nodes;
%%  \item The children of an $e$ node are $e$ and $o$ nodes;
%%  \item The children of an $n$ node are $o$ and $n$ nodes.
%%  \end{itemize}
%%  %%  This pattern is shown in Fig.~\ref{fig:CWparity}.
%
\begin{eqnarray}
\phantom{o}\phantom{\swarrow}        {\ e\ }\phantom{\searrow}\phantom{o}\qquad \quad 
\phantom{o}\phantom{\swarrow}        {\ o\ }\phantom{\searrow}\phantom{o}\qquad \quad 
\phantom{o}\phantom{\swarrow}        {\ n\ }\phantom{\searrow}\phantom{o}\qquad \quad 
\nonumber
\\
\phantom{o}        {\swarrow}\phantom{\ o\ }        {\searrow}\phantom{o}\qquad \quad
\phantom{o}        {\swarrow}\phantom{\ o\ }        {\searrow}\phantom{o}\qquad \quad
\phantom{o}        {\swarrow}\phantom{\ o\ }        {\searrow}\phantom{o}\qquad \quad
\label{eq:CWtransfer}
\\
        {e}\phantom{\swarrow}\phantom{\ o\ }\phantom{\searrow}        {o}\qquad \quad
          {n}\phantom{\swarrow}\phantom{\ o\ }\phantom{\searrow}        {e}\qquad \quad
            {o}\phantom{\swarrow}\phantom{\ o\ }\phantom{\searrow}        {n}\qquad \quad
\nonumber
\end{eqnarray}
%
%%  This pattern is shown in Fig.~\ref{fig:CWparity}.  %    INCLUDE   ????????
We will now show that the elements of the Calkin-Wilf tree
are remarkably regular, with the pattern $(o, n, e)$ repeating interminably.
Thus, the parity of any specific term in the tree can easily be deduced.
We also prove that, with the ordering of $\QQ$ corresponding to the 
Calkin-Wilf tree, the three parity classes all have the same density.

%%  \newcommand{\bs}{\Large}   %    Use \quad.

% \textbf{Theorem 1.}
\begin{theorem}
Let $\QQ^{+}$ be ordered with the Calkin-Wilf tree.
Then $\QQ^{+}$ may be partitioned into three parity classes,
$\QE^{+}$, $\QO^{+}$ and $\QN^{+}$, each having asymptotic density $\frac{1}{3}$.
\end{theorem}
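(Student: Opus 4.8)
The plan is to pin down the parity of every entry of the Calkin--Wilf tree purely in terms of its position, and then read off the densities by a residue count. I label the nodes by the standard binary-heap indexing: the root $1/1$ is node $1$, and the node at position $i$ has its left child $m/(m+n)$ at position $2i$ and its right child $(m+n)/n$ at position $2i+1$. Reading the tree in breadth-first (level) order is exactly the Calkin--Wilf enumeration of $\QQ^{+}$, so the $i$-th rational in this ordering is the node at position $i$.

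The central claim I would prove is that the parity of node $i$ depends only on $i \bmod 3$: node $i$ is odd when $i\equiv 1$, has no parity when $i\equiv 2$, and is even when $i\equiv 0 \pmod 3$. This follows by induction on $i$. The base case is the root $1/1$, which is odd and sits at position $1\equiv 1$. For the inductive step I assume the claim for node $i$ and apply the parity-transfer rules (\ref{eq:CWtransfer}): an odd node yields a none/even pair, a none node yields an odd/none pair, and an even node yields an even/odd pair (left child listed first). The only thing to verify is that these outcomes match the residues of the child positions $2i$ and $2i+1$. Since $2\equiv -1 \pmod 3$, the map $i\mapsto 2i$ permutes residues by $1\mapsto 2,\ 2\mapsto 1,\ 0\mapsto 0$, while $i\mapsto 2i+1$ sends $1\mapsto 0,\ 2\mapsto 2,\ 0\mapsto 1$. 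Running through the three cases shows each transfer rule is exactly consistent with the residue assignment, closing the induction.

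With this position--parity dictionary in hand, the densities are immediate. Under the Calkin--Wilf ordering the first $n$ rationals are precisely nodes $1,\dots,n$, so the number of them lying in $\QO^{+}$ (respectively $\QN^{+}$, $\QE^{+}$) equals the number of $i\in\{1,\dots,n\}$ with $i\equiv 1$ (respectively $2$, $0$) modulo $3$. Each such count is $n/3+O(1)$, so dividing by $n$ and letting $n\to\infty$ in the density definition (\ref{eq:defdenZ}) gives $\rho_{\QQ^{+}}(\QE^{+})=\rho_{\QQ^{+}}(\QO^{+})=\rho_{\QQ^{+}}(\QN^{+})=\tfrac{1}{3}$.

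There is no deep obstacle here: the whole argument rests on the single observation that doubling acts as $-1$ modulo $3$, which is what collapses the three transfer rules into one periodic pattern once positions are used as the index. The only points that require care are the bookkeeping that breadth-first order coincides with heap indexing, and confirming that the left/right assignments in (\ref{eq:CWtransfer}) line up with the $2i$/$2i+1$ convention rather than being swapped.
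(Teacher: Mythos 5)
Your proof is correct, and it takes a genuinely different route from the paper's. The paper inducts on whole rows of the tree: using the transfer rules (\ref{eq:CWtransfer}) it shows that an odd row has parity pattern $(one)^k o$, that the next row then has pattern $ne(one)^{2k}$, and that these row-strings, concatenated in breadth-first order, form an unbroken repetition of $o,n,e$. You instead induct on individual nodes via binary-heap indexing: the children of node $i$ sit at positions $2i$ and $2i+1$, and since $2\equiv -1 \pmod 3$ the two child maps permute the residue classes mod $3$ exactly as the transfer rules permute the three parities, so parity is a function of $i \bmod 3$ from the start. Your version buys cleaner bookkeeping: three one-line case checks give the position--parity dictionary directly, with no need to verify that consecutive row-patterns stitch together across row boundaries (a point the paper's argument passes over quickly, since the reader must confirm that $(one)^k o$ followed by $ne(one)^{2k}$ really continues the period-3 pattern). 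What the paper's version buys is an explicit description of the internal pattern of each row, which has independent descriptive value and serves as the template reused for the Stern-Brocot tree in Theorem 2. The closing density computation --- counting residues mod $3$ in $\{1,\dots,n\}$ and applying (\ref{eq:defdenZ}) --- is the same in both arguments.
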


\begin{proof}
The parity classes for the first few rows of the Calkin-Wilf tree are shown in
Fig.~\ref{fig:CWparitytree}. Odd rows follow a pattern $(one)^k o$ for some
$k \ge 0$ and even rows follow a pattern $ne(one)^k$. This is clearly true for the
first few rows. Using the transfer rules (\ref{eq:CWtransfer}), and arguing
inductively, it is clear that a row with pattern 
$(one)^k o$ is followed by a row with pattern $ne(one)^{2k}$. This, in turn,
is followed by a row with pattern $(one)^{4k+1} o$. The full sequence begins
$$
o\ \quad\ ne\ \quad\ (one) o\ \quad\ ne (one)^2\ \quad\
(one)^5 o\ \quad\ ne (one)^{10}\ \quad\ \cdots
$$
We conclude that, if the entire tree is written row by row as a sequence,
the parity follows an unvarying pattern, with \emph{odd} followed
by \emph{none} followed by \emph{even}.
The parity of an element at any position $N$ is immediately deduced
from $m = N (\mbox{mod}\ 3)$. The pattern also implies that the three
parity classes have equal densities.
%\hfill \qedsymbol
\end{proof}

%%%%%%%%%%%%%%%%%%%%%%%%%%%%%%
\begin{figure}[t]
\begin{center}
\includegraphics[width=0.8\textwidth]{./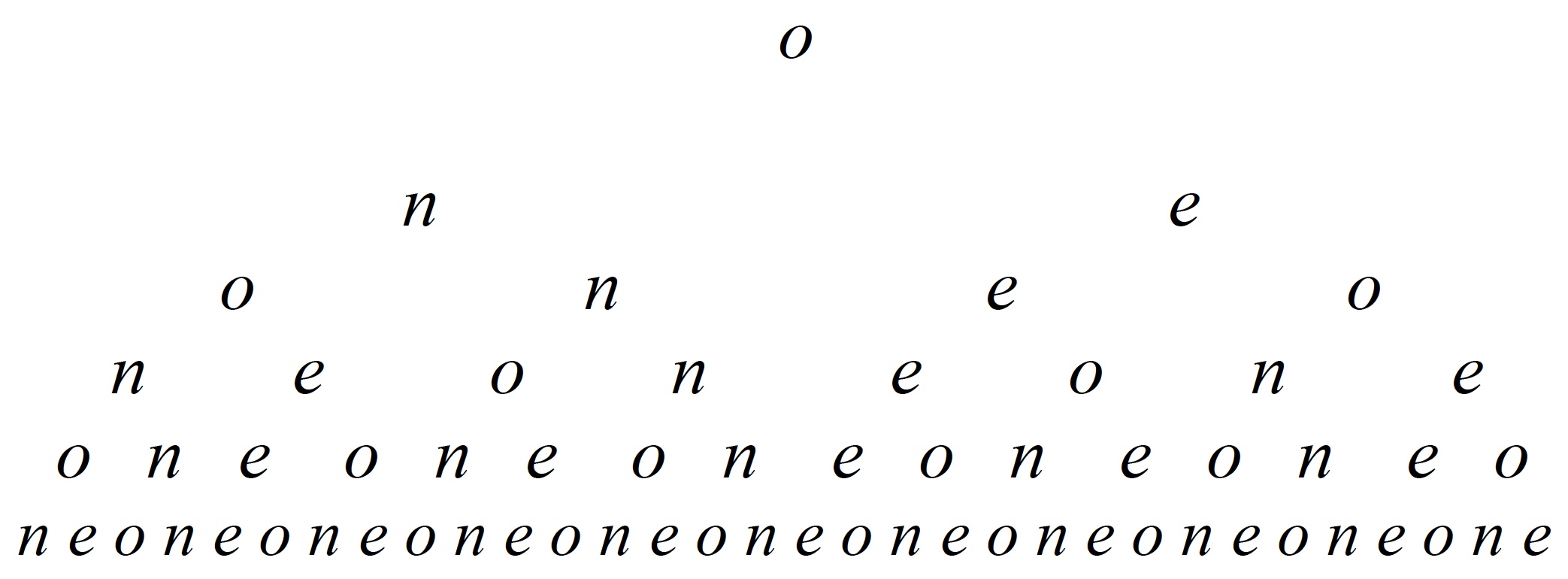}
\caption{Parity classes of terms in the Calkin-Wilf tree.}
\label{fig:CWparitytree}
\end{center}
\end{figure}
%%%%%%%%%%%%%%%%%%%%%%%%%%%%%%

The Calkin-Wilf tree enumerates the positive rationals $\QQ^{+}$. This enumeration,
which we write $\{q_n:n\in\NN\}$, can be extended in a natural way to the full set of rationals:
we enumerate $\QQ$ by $\{0,q_1,-q_1,q_2,-q_2, \dots \}$. With this ordering,
the rational numbers split into three parts, each of asymptotic density $\frac{1}{3}$.

To summarise, the parity classes of elements of the Calkin-Wilf tree follow a simple
pattern if arranged in a single sequence: the pattern $\{o, n, e\}$ repeats indefinitely
(see Fig.~\ref{fig:CWparitytree}).
As a result, the densities of the parity classes in $\QQ$ are all equal for this ordering:
\begin{equation}
\rho_\QQ(\QE) = \rho_\QQ(\QO) = \rho_\QQ(\QN) = \tfrac{1}{3} \,.
\label{fig:Q-OneThird}
\end{equation}
%
%%%%%%%%%%%%%%%%%%%%%%%%%%%%%%%%%%%%%%%%%%%%%%

The Stern-Brocot tree \cite[pg.~116]{GrKnPa} % (Graham, Knuth and Patashnik, 1994, pg.~116) 
is another ordering of $\QQ$ very similar to the Calkin-Wilf tree.
The numbers at each level are formed from the mediants of adjacent pairs of numbers above
(Fig.~\ref{fig:CWtreeSBtree}, right panel). The mediant of two (reduced) rationals,
$m_1/n_1$ and $m_2/n_2$ is defined as $M(m_1/n_1, m_2/n_2):= (m_1+m_2)/(n_1+n_2)$.
We note that the parity of the mediants of two numbers of
different parity is the third parity:
\begin{equation}
M(e,o) = n \,,\qquad M(o, n) = e \,,\qquad M(n,e) = o \,.
\label{eq:Mtrans}
\end{equation}
We now show that, with the ordering of the Stern-Brocot tree, 
(\ref{fig:Q-OneThird}) holds true.

\begin{theorem}
%%  The parity pattern for an odd row $k$ of the Stern-Brocot tree  is
%%  $(eon)^{K}$, where $K = (2^k+1)/3$ and, for an even row $k$, is
%%  $e(noe)^{K}e$ where $K = (2^\ell-1)/3$.
For the order of $\QQ$ induced by the Stern-Brocot process,
the asymptotic density of each parity class, $\QE$, $\QO$ and $\QN$,
is $\frac{1}{3}$.
\end{theorem}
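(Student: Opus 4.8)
The plan is to mimic the Calkin--Wilf argument, exploiting the transfer rules (\ref{eq:Mtrans}), but the essential preliminary is to guarantee that those rules actually apply at every mediant. First I would record the classical unimodularity of the Stern--Brocot construction: if $m_1/n_1$ and $m_2/n_2$ are neighbours in some row (with the boundary values $0/1$ and $1/0$ adjoined), then $m_2 n_1 - m_1 n_2 = 1$. This is immediate by induction, since the relation is inherited by each new neighbour pair, once one checks it for the pairs $(m_1/n_1,\ \mathrm{mediant})$ and $(\mathrm{mediant},\ m_2/n_2)$. Two consequences follow at once. First, each mediant $(m_1+m_2)/(n_1+n_2)$ is already in lowest terms, so its parity is genuinely read off from $(m_1+m_2 \bmod 2,\ n_1+n_2 \bmod 2)$ and the rules (\ref{eq:Mtrans}) apply verbatim. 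Second, since a determinant of $1$ is odd, a short check of the three parity types shows that neighbouring entries can never share a parity; hence every mediant carries the unique ``third'' parity, distinct from both of its parents.

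With this in hand I would prove, by induction on the level, that the full sequence (boundaries included) always runs through the three parities in strict period-$3$ order, the cyclic orientation reversing at each step. The base case is the root level $e,\,o,\,n$ (that is, $0/1,\,1/1,\,1/0$). For the inductive step, if a stretch reads $p_1,p_2,p_3,p_1,\dots$ with $\{p_1,p_2,p_3\}=\{e,o,n\}$, then by (\ref{eq:Mtrans}) the mediant inserted between $p_i$ and $p_{i+1}$ is the remaining parity $p_{i-1}$ (indices mod $3$), so interleaving produces $p_1,p_3,p_2,p_1,\dots$, again period-$3$ through all three parities. Reading off the newly inserted entries shows that row $L$, of length $2^{L-1}$, is itself period-$3$ through all three parities; explicitly, odd rows take the form $(oen)^a o$ and even rows the form $(neo)^b ne$, so that each parity occurs in row $L$ exactly $2^{L-1}/3 + O(1)$ times.

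To pass from this structure to the densities requires some care, and this is the step I expect to be the main obstacle: because each row is as long as all preceding rows combined, a partial row cannot be dismissed as negligible, so knowing that the cumulative totals balance at the end of each row is not enough. The remedy is precisely the within-row period-$3$ structure just established. Writing $S_p(N)$ for the number among the first $N$ entries having parity $p$, I would split $N = (2^{L-1}-1) + j$ into the complete rows $1,\dots,L-1$ together with the first $j$ entries of row $L$. The complete rows contribute $(2^{L-1}-1)/3 + O(\log N)$ to each parity, by summing the $O(1)$ per-row discrepancies over the $O(\log N)$ rows (the explicit patterns above in fact give $O(1)$), and the partial row, being period-$3$, contributes $j/3 + O(1)$ to each parity. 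Hence $S_p(N) = N/3 + O(\log N)$ for every parity $p$, and therefore $S_p(N)/N \to \tfrac{1}{3}$.

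Finally I would extend from $\QQ^{+}$ to all of $\QQ$ exactly as for the Calkin--Wilf tree: negation fixes the parity class, since $-m \equiv m \ (\mathrm{mod}\ 2)$, so the interleaved enumeration $\{0, q_1, -q_1, q_2, -q_2, \dots\}$ again assigns each of $\QE$, $\QO$ and $\QN$ density $\tfrac{1}{3}$, establishing (\ref{fig:Q-OneThird}) for the Stern--Brocot ordering. It is worth flagging one contrast with the Calkin--Wilf case: there the row patterns dovetail into a single global period-$3$ sequence, whereas here consecutive rows carry opposite cyclic orientations, so the concatenation is \emph{not} globally periodic, and the density conclusion rests on the ``balanced up to $O(1)$'' counting above rather than on an exact repeating pattern.
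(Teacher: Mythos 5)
Your proposal is correct and takes essentially the same route as the paper: an induction on levels using the mediant rules (\ref{eq:Mtrans}) to show that each Stern--Brocot row runs through the three parities in strict period-3 order (your alternating-orientation description is exactly the paper's $(eon)^{K}$ and $e(noe)^{K}n$ patterns), followed by the same interleaved extension $\{0,q_1,-q_1,q_2,-q_2,\dots\}$ to all of $\QQ$. The two places where you add substance --- the unimodularity fact $m_2n_1-m_1n_2=1$, which guarantees that mediants are in lowest terms and that adjacent entries never share a parity class so that (\ref{eq:Mtrans}) genuinely applies at every insertion, and the explicit $S_p(N)=N/3+O(\log N)$ bookkeeping that handles partial rows despite each row being as long as all earlier rows combined --- are steps the paper compresses into the single sentence ``This implies that the asymptotic densities are equal,'' so your extra care fills real gaps rather than rederiving what is already there.
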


\begin{proof}
The Stern-Brocot tree is generated starting from level $0$ with the boundary elements
$\left[\frac{0}{1}\right]$ and $\left[\frac{1}{0}\right]$, representing $0$ and $\infty$
and with parities $[e]$ and $[n]$.
To get each subsequent level we add, between each pair of adjacent numbers,
the mediant of that pair, retaining all numbers already generated.
The results, for the first few levels, are shown in Fig.~\ref{fig:CWtreeSBtree}
(right panel).  The parity pattern for the first few levels is
$$
[e]o[n] \quad\ 
[e]noe[n] \quad\ 
[e]oneoneo[n] \quad\ 
[e]noenoenoenoenoe[n] \,.
$$
Using the transfer rules (\ref{eq:Mtrans}), an odd row with parity
$(eon)^{K}$ is followed by even row with parity $e(noe)^{2K-1}n$.
This in turn is followed by an odd row with parity $(eon)^{4K-1}$.
By an inductive argument, it follows that the parity pattern for an
odd row $k$ is $(eon)^{K}$,
where $K =   (2^k+1)/3$ and, for an even row $k$, is $e(noe)^{K}n$
where $K = (2^\ell-1)/3$.
This implies that the asymptotic densities are equal for all three parity classes.

The Stern-Brocot tree enumerates the positive rationals $\QQ^{+}$. This enumeration,
is easily extended to the full set of rationals, as was done above for the Calkin-Wilf tree.
Then the rational numbers split into three parts, each of asymptotic density $\frac{1}{3}$.
\end{proof}

%%  
%%  \bigskip\hrule\bigskip
%%  
%%  
%%  
%%  Odd rows:    k = 2m+1.   2^k+1  elements.  K =   (2^k+1)/3     (eon)^{K}
%%  Even rows: \ell = 2m.  2^\ell+1 elements.  L = (2^\ell-1)/3 e(noe)^{L}n
%%  
%%  (eon)^K \longrightarrow  e(noe)^{2K-1}e
%%  
%%  e(noe)^{L}e \longrightarrow (eon)^{2L+1} 
%%  
%%  So, 
%%  (eon)^K \longrightarrow  e(noe)^{2K-1}e \longrightarrow (eon)^{4K-1} 
%%  

%%%%%%%%%%%%%%%%%%%%%%%%%%%%%%%%%%%%%%%%%%%%%%%%%%%%%%

The determination of the densities of parity classes for the ordering corresponding to
the Farey sequences is left as a challenge for readers.

%%%%%%%%%%%%%%%%%%%%%%%%%%%%%%%%%%%%%%%%%%%%%%%%%%%%%%%%%%%%
%%%%%%%%%%%%%    CONCLUSIONS   %%%%%%%%%%%%%%%%%%%
\section{Conclusion.}
\label{sec:conclude}

We have extended parity from the integers to the rational
numbers. Three parity classes --- even, odd and `none' ---
were found.  The even and odd rationals $\QE$ and $\QO$ follow the
usual rules of parity.  The union of these forms an additive subgroup
$\QP$ of $\QQ$.

Using the 2-adic valuation, we partitioned $\QQ$ into subsets and
found a chain of subgroups, each having a quotient group of cosets.
We constructed a complete set of representatives for the cosets of $\QP$.
%%  This set equates to the rational-valued 2-adic integers, $\ZZ_2\cap\QQ$.

The Calkin-Wilf tree was found to have a remarkably simple parity
pattern, with the sequence `odd/none/even' repeating indefinitely.
Using the natural density, which provides a means of distinguishing
the sizes of countably infinite sets, we showed that, with the
Calkin-Wilf ordering, the three parity classes are equally dense
in the rationals. The same conclusion holds for the Stern-Brocot tree.

Finally, we remark that, while this study used only 2-adic numbers,
there is potential for broad extensions and generalizations using more general $p$-adic
numbers.

%%%%%%%%%%%%%%%%%%%%%%%%%%%%%%%%%%%%%%%%%%%%%%%%%%

%%  Acknowledgments to be added at a later stage.
\subsection*{Acknowledgments.}
We are grateful to Tom Laffey, Emeritus Professor, School of Mathematics
\&\ Statistics, University College Dublin, for reading a draft of this paper
and to Tony O'Farrell, Emeritus Professor at Maynooth University
for guidance on $p$-adic numbers.

% \section*{Spurious section: Remove}

%%%%%%%%%%%%%     REFERENCES    %%%%%%%%%%%%%%%%%%

%%%%%%%%%%%%%%%%%%%%%%%%%%%%%%%%%%%%%%%%%%%%%%
%%  \input BiogNotes.tex
%%%%%%%%%%%%%%%%%%%%%%%%%%%%%%%%%%%%%%%%%%%%%%
%%%%%%%%%%%%%%%%%%%%%%%%%%%%%%%%%%%%%%%%%%%%%%

\end{document}